\newtheorem{theorem}{Theorem}
\newtheorem*{theorem*}{Theorem}
\newtheorem*{definition}{Definition}
\newtheorem{lemma}[theorem]{Lemma}
\newtheorem{prop}[theorem]{Proposition}
\newtheorem{corollary}[theorem]{Corollary}
\newcommand{\col}{\mathrm{Col}}
\newcommand{\la}{\langle}
\newcommand{\ra}{\rangle}
\newcommand{\rest}{{\restriction}}
\newcommand{\rank}{\mathrm{rank}}
\newcommand{\bbB}{\mathbb{B}}
\newcommand{\bbE}{\mathbb{E}}
\newcommand{\bbL}{\mathbb{L}}
\newcommand{\bbP}{\mathbb{P}}
\newcommand{\bbQ}{\mathbb{Q}}
\newcommand{\p}{\mathcal{P}}
\DeclareMathOperator{\dom}{dom}
\DeclareMathOperator{\ran}{ran}
\title{Saturated ideals from Laver collapses}
\author{Monroe Eskew}
\address{Kurt G\"odel Research Center,
University of Vienna,
Vienna, Austria.}
\email{monroe.eskew@univie.ac.at}
\subjclass{03E05, 03E35, 03E55}
\thanks{This work was supported by the Austrian Science Fund (FWF) through Project P34603.}
\keywords{Chang's Conjecture, saturated ideal, huge cardinal}
\begin{document}

\begin{abstract}
    Addressing a question of Shioya, we show that two-step iterations of the Laver collapse can force saturated ideals and Chang conjectures.
\end{abstract}

\maketitle

\section{Introduction}

In \cite{shioyaeaston}, Shioya introduced the Easton collapse and showed that a two-step iteration of these collapses, $\bbE(\mu,\kappa) * \dot \bbE(\kappa,\lambda)$, can introduce a saturated ideal on $\kappa$, starting from a model in which $\kappa$ is huge.  The author and Hayut \cite{eh} showed that the same poset forces Chang's conjecture $(\mu^{++},\mu^+)\twoheadrightarrow(\mu^+,\mu)$.  This is arguably the simplest known method for forcing these properties of successor cardinals, and it lends itself quite easily to iteration for obtaining many instances simultaneously \cite{eh,shioya2020}.  Shioya \cite{shioyaeaston} also raised the question of whether similar effects may be achieved by a two-step iteration of the Laver collapse $\bbL(\mu,\kappa) * \dot \bbL(\kappa,\lambda)$, which was introduced in \cite{laver}.  Here, we answer this affirmatively by constructing a projection between Laver collapses indexed by different cardinals.  Our main result is the following, which is proved in a more specific form as Theorem~\ref{main}.

\begin{theorem*}
If GCH holds, $j : V \to M$ is a huge embedding with $\mathrm{crit}(j) = \kappa$ and $j(\kappa) = \lambda$, and $\mu<\kappa$ is regular, then $\bbL(\mu,\kappa) * \dot\bbL(\kappa,\lambda)$ forces $(\mu^{++},\mu^+) \twoheadrightarrow (\mu^+,\mu)$ and that there is a saturated ideal on $\kappa$.
\end{theorem*}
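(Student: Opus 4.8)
The plan is to run the template of Shioya's huge-cardinal argument for the Easton collapse, with the Laver-collapse analogue of his product factorization supplied by a projection lemma. Fix a huge embedding $j : V \to M$ with $\mathrm{crit}(j) = \kappa$, $j(\kappa) = \lambda$ and $M^\lambda \subseteq M$, and assume GCH. Two routine observations set the stage. First, $\bbL(\mu,\kappa) \subseteq V_\kappa$, so $j$ is the identity on it and $\bbL(\mu,\kappa)$ is an initial segment of the iteration $j(\bbL(\mu,\kappa)) = \bbL(\mu,\lambda)^M$, hence a complete suborder with $\bbL(\mu,\lambda)^M \cong \bbL(\mu,\kappa) * \dot{\mathbb T}$ for a tail $\dot{\mathbb T}$. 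Second, $\bbP := \bbL(\mu,\kappa) * \dot\bbL(\kappa,\lambda)$ forces $\kappa = \mu^+$ and $\lambda = \kappa^+ = \mu^{++}$; and since $\bbL(\mu,\lambda)^M$ is $\lambda$-c.c.\ in $M$ and $M^\lambda \subseteq M$, it is $\lambda$-c.c.\ in $V$.

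The crux is the projection lemma: the tail $\dot{\mathbb T}$ projects, over $\bbL(\mu,\kappa)$, onto $\dot\bbL(\kappa,\lambda)$, so that there is a projection $\rho : \bbL(\mu,\lambda)^M \to \bbL(\mu,\kappa) * \dot\bbL(\kappa,\lambda)$ which is the identity on the coordinates below $\kappa$. Granting this, $\bbL(\mu,\lambda)^M$ is forcing equivalent to $(\bbL(\mu,\kappa) * \dot\bbL(\kappa,\lambda)) * \dot\bbR$ where $\dot\bbR$ is the quotient, and $\dot\bbR$ is forced to be $\lambda$-c.c.\ (since $\bbL(\mu,\lambda)^M$ is). I expect the construction of $\rho$ to be the main obstacle and the real content of the paper: unlike the Easton collapse, which is a product and factors transparently as ``stuff below $\kappa$'' times ``a tail'', the Laver collapse is an iteration with support conditions, so one must build the tail projection between Laver collapses with different base cardinals ($\mu$ versus $\kappa$, both reaching $\lambda$) by hand, matching up the Laver guessing functions at $\kappa$ and at $\lambda$ and checking that the projection conditions survive the support restrictions of the iteration.

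With $\rho$ in hand, force with $\bbP$ over $V$ to get $G * H$, then force over $V[G*H]$ with $\bbR := \dot\bbR^{G*H}$ to get $K$, and let $\bar G \subseteq \bbL(\mu,\lambda)^M$ be the induced $M$-generic, so that $\rho\,'' \bar G$ generates $G*H$ and $\bar G \cap \bbL(\mu,\kappa) = G$. Since $j\,'' G = G \subseteq \bar G$, the embedding lifts to $\hat\jmath : V[G] \to M[\bar G]$ in $V[G*H][K]$. Now lift once more through $\bbL(\kappa,\lambda)$: here one invokes the genericity-transfer property of the Laver collapse from \cite{laver}, using $M^\lambda \subseteq M$ and the closure of $\bbL(\mu,\lambda)^M$, namely that the filter generated by $\hat\jmath\,'' H$ is $\hat\jmath(\bbL(\kappa,\lambda)) = \bbL(\lambda, j(\lambda))^{M[\bar G]}$-generic over $M[\bar G]$. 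This yields a lift $\tilde\jmath : V[G*H] \to M[\bar G * \hat\jmath(H)]$, definable in $V[G*H][K]$, with $\mathrm{crit}(\tilde\jmath) = \kappa$ and $\tilde\jmath(\kappa) = \lambda$; the key point is that $\bbR$ is $\lambda$-c.c.\ over $V[G*H]$ while $\lambda = \kappa^+$ there.

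The two conclusions now follow by standard arguments. For the ideal, work in $V[G*H]$ and set $I = \{A \subseteq \kappa : \Vdash_{\bbR} \check\kappa \notin \dot{\tilde\jmath}(\check A)\}$; since $\mathrm{crit}(\tilde\jmath) = \kappa$, $I$ is a normal, $\kappa$-complete, proper ideal on $\kappa$, and $A \mapsto \| \check\kappa \in \dot{\tilde\jmath}(\check A) \|_{\bbR}$ is a Boolean homomorphism with kernel $I$ embedding $\mathcal P(\kappa)/I$ into $\mathrm{RO}(\bbR)$; the $\lambda$-c.c.\ of $\bbR$ then makes $I$ a $\lambda = \kappa^+$-saturated ideal. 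For Chang's conjecture, given a structure $\mathfrak A$ on $\mu^{++} = \lambda$ with a predicate for $\mu^+ = \kappa$ in $V[G*H]$, note that $X := \tilde\jmath\,'' \lambda = j\,'' \lambda \in M$ is an elementary substructure of $\tilde\jmath(\mathfrak A)$ with $X \cap \lambda = \kappa$; in $M[\bar G * \hat\jmath(H)]$ one computes $\mu^+ = \lambda$, $\mu^{++} = j(\lambda)$, $|X| = \lambda = \mu^+$ and $|X \cap \lambda| = |\kappa| = \mu$, so that model sees a witness to $(\mu^{++},\mu^+) \twoheadrightarrow (\mu^+,\mu)$ for the structure $\tilde\jmath(\mathfrak A)$, and elementarity of $\tilde\jmath$ reflects this back to $\mathfrak A$ in $V[G*H]$, exactly as in \cite{eh}. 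The more specific formulation promised as Theorem~\ref{main} is obtained by tracking the exact forcings and ideals produced along the way.
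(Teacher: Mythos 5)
Your overall architecture is the right one and matches the paper's: realize $j(\bbL(\mu,\kappa)) = \bbL(\mu,\lambda)$ as a projection onto $\bbL(\mu,\kappa)*\dot\bbL(\kappa,\lambda)$, force with the quotient to lift $j$ through the first step, use a master condition for the second step, and read off Chang's conjecture and the ideal. But there are two genuine gaps.

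First, you leave the projection $\rho$ entirely unconstructed, and you say yourself that you expect it to be the real content of the paper --- it is. The difficulty is not that $\bbL(\mu,\lambda)$ is ``an iteration'' (it is a restricted Easton-support product, just like $\bbL(\kappa,\lambda)$, and there are no Laver functions involved), but that its coordinates above $\kappa$ are functions of length $<\mu$, whereas the coordinates of $\bbL(\kappa,\lambda)$ have length $<\kappa$; a projection must convert the former into the latter. The paper does this (Lemma~\ref{laverproj}) by exploiting the coordinate of $\bbL(\mu,\lambda)$ at $\kappa$ itself, which is a copy of $\col(\mu,\kappa)$: a condition $p\in\col(\mu,\kappa)$ determines a closed set $C_p\subseteq\kappa$ of partition points, and fixed bijections $f^{\alpha,\beta,\delta}_i$ ($i<\delta$) between $\delta$ and the functions from $[\alpha,\beta)$ to $\delta$ translate each short upper coordinate into a function of length $\sup C_p$. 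This yields $\bbL(\mu,\lambda)\cong\bbL(\mu,\kappa)\times\col(\mu,\kappa)\times\bbL(\kappa,\lambda)$ on a dense set, and then Laver's term-forcing projection $\bbP\times T(\bbP,\dot\bbQ)\to\bbP*\dot\bbQ$, together with a dense embedding of $\bbL(\kappa,\lambda)$ into $T(\bbL(\mu,\kappa),\dot\bbL(\kappa,\lambda))$, converts the product decomposition into the desired projection onto $\bbL(\mu,\kappa)*\dot\bbL(\kappa,\lambda)$. None of this is routine, and without it there is no theorem.

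Second, your treatment of the second lift is wrong as stated: the filter generated by $\hat\jmath[H]$ is not $\bbL(\lambda,j(\lambda))^{M[\bar G]}$-generic over $M[\bar G]$; it is contained in the cone above the single master condition $q^*=\inf \hat\jmath[H]$ and misses almost every dense set. For Chang's conjecture this is harmless: force further with $\bbL(\lambda,j(\lambda))^{M[\bar G]}$ below $q^*$, lift in that larger extension, and pull the conclusion back to $V[G*H]$ by elementarity of the lifted embedding, as the paper does. But for the saturated ideal it is fatal, because your definition of $I$ and the embedding of $\p(\kappa)/I$ into $\mathrm{RO}(\bbR)$ require the lifted embedding to be definable from an $\bbR$-generic over $V[G*H]$; if you must additionally force with the $\lambda$-closed but far-from-$\lambda$-c.c.\ poset $\bbL(\lambda,j(\lambda))^{M[\bar G]}$ to obtain the lift, the $\lambda$-c.c.\ argument collapses. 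The paper's fix is to replace $j$ by a derived almost-huge embedding $i$ with $N^{<\lambda}\subseteq N$ and $\lambda<i(\lambda)<\lambda^+$, so that in $V[\bar G]$ one can enumerate the relevant dense sets in order type $\lambda$ and build the second-step generic by hand, interleaving the master conditions $q^*_\alpha=\inf i[H\rest\alpha]$; only then does the lift live in $V[G*H][K]$ and the $\lambda$-c.c.\ of the quotient give saturation. You need to add both of these ingredients.
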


The assumption of GCH can be eliminated, but we leave those details to the reader.
The projection we construct has a very simple quotient.  Lemma~\ref{laverproj} shows that the Laver collapse has the remarkable property that for regular cardinals $\mu<\kappa<\lambda$ with $\lambda$ inaccessible, $\bbL(\mu,\lambda)$ is forcing-equivalent to $\bbL(\kappa,\lambda) \times \col(\mu,\kappa)$.

Our notation and terminology should be standard.  We assume familiarity with the basics of large cardinals and forcing.  Let us recall some important notions that we will use.  For a regular cardinal $\kappa$, a $\kappa$-complete ideal $I$ on $\kappa$ is called \emph{saturated} if the quotient algebra $\p(\kappa)/I$ has the $\kappa^+$-chain condition $(\kappa^+$-c.c.).  For basic properties of saturated ideals, see \cite{kanamori}.  For partially ordered sets $\bbP,\bbQ$ with maximal elements $1_\bbP,1_\bbQ$, a map $\pi : \bbQ \to \bbP$ is called a \emph{projection} if it is order-preserving, sends $1_\bbQ$ to $1_\bbP$, and has the property that whenever $p \leq \pi(q)$, there is some $q' \leq q$ such that $\pi(q') \leq p$.  It is not hard to check that if $H \subseteq \bbQ$ is $\bbQ$-generic over $V$, then $\pi[H]$ generates a filter that is $\bbP$-generic over $V$, which we denote by $\pi(H)$.  It is a folklore fact that whenever $\pi : \bbQ \to \bbP$ is a projection, forcing with $\bbQ$ is equivalent to first taking a generic filter $G \subseteq\bbP$ and then forcing with the quotient $\bbQ/G = \pi^{-1}[G]$.  Another way a poset $\bbQ$ may be decomposed into a two-step iteration is when there is a \emph{complete embedding} $e : \bbP \to \bbQ$, which is an order-preserving map that preserves maximal antichains (see \cite{kanamori}).  A complete embedding is called a \emph{dense embedding} if its range is dense in the target poset, and in this case, the two posets yield the same generic extensions.

\section{The projection}

Recall that a set of ordinals $X$ is \emph{Easton} when for all regular $\alpha$, $\sup(X \cap \alpha)<\alpha$.

\begin{definition}[Laver collapse]
Suppose $\kappa$ is a regular cardinal and $\lambda>\kappa$.  The Laver collapse $\bbL(\kappa,\lambda)$ is the collection of partial functions $p$ with the following properties:
\begin{enumerate}
    \item $\dom(p)$ is an Easton set of regular cardinals $\delta$ such that $\kappa<\delta<\lambda$.
    \item There is $\xi < \kappa$ such that for all $\delta \in \dom(p)$, $p(\delta)$ is a function with domain contained in $\xi$ and range contained in $\delta$.
    \end{enumerate}
We put $p \leq q$ when $\dom(q) \subseteq \dom(p)$ and for all $\delta \in \dom(q)$, $q(\delta) \subseteq p(\delta)$.
\end{definition}

\begin{lemma}
If $\kappa$ is regular and $\lambda>\kappa$, then $\bbL(\kappa,\lambda)$ is $\kappa$-directed-closed 
and collapses all cardinals between $\kappa$ and $\lambda$.
\end{lemma}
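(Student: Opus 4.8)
First I would establish $\kappa$-directed-closure. Given a directed family $\{p_i : i \in I\}$ of conditions with $|I| < \kappa$, I want to form the union as a candidate lower bound. The domains $\dom(p_i)$ are each Easton sets of regular cardinals in the interval $(\kappa,\lambda)$, and by directedness any two agree on their common domain, so $\bigcup_i p_i$ is a well-defined partial function. I need to check the two defining clauses. For clause (2), each $p_i$ comes with a witness $\xi_i < \kappa$; since $|I| < \kappa$ and $\kappa$ is regular, $\xi = \sup_i \xi_i < \kappa$ works uniformly. For clause (1), I need $D := \bigcup_i \dom(p_i)$ to be an Easton set of regular cardinals $< \lambda$; the "regular cardinals $< \lambda$" part is immediate, but Eastonness requires care. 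Given a regular $\alpha$, I must show $\sup(D \cap \alpha) < \alpha$. If $\alpha > \kappa$ this follows because directedness forces $D \cap \alpha$ to be an increasing union: actually each $\dom(p_i) \cap \alpha$ is bounded below $\alpha$ by Eastonness of $p_i$, and by directedness the family $\{\dom(p_i)\cap\alpha : i\in I\}$ is also directed under inclusion, so $D \cap \alpha = \bigcup_i (\dom(p_i)\cap\alpha)$; since $|I|<\kappa\le\alpha$ and $\cf(\alpha)=\alpha$, a union of $<\alpha$ many sets each bounded in $\alpha$ is bounded in $\alpha$. For $\alpha \le \kappa$ there is nothing to check since $D$ contains no ordinals $\le \kappa$. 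So $\bigcup_i p_i$ is a condition below all $p_i$, giving $\kappa$-directed-closure (in particular $\kappa$-closure).

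Next, collapsing. Fix a regular cardinal $\delta$ with $\kappa \le \delta < \lambda$; I claim $|\delta| \le \kappa$ in the extension, which suffices (and handles $\delta = \kappa$ trivially, while for $\delta < \kappa$ regular nothing new is collapsed). Actually the interesting case is $\kappa < \delta < \lambda$. For such $\delta$, consider the set $D_{\delta,\beta}$ of conditions $p$ with $\delta \in \dom(p)$ and $\beta \in \dom(p(\delta))$, for $\beta < \kappa$. I would argue each $D_{\delta,\beta}$ is dense: given any $q$, first extend so that $\delta \in \dom(q)$ — this is legitimate since adding a single regular cardinal $\delta$ to an Easton set keeps it Easton (a single point $< \lambda$ being added below a regular $\alpha > \delta$ raises the sup by at most that one point, still below $\alpha$; and $\delta$ itself is fine) — setting $q(\delta) = \emptyset$ if needed; then extend $q(\delta)$ to include $\beta$ in its domain with some value $< \delta$, which is allowed as long as we keep $\dom(q(\delta)) \subseteq \xi$ for some $\xi < \kappa$, so choose $\xi > \max(\beta, \text{old witness})$. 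Hence in the generic extension, $\bigcup\{p(\delta) : p \in G\}$ is a function from $\kappa$ onto a subset of $\delta$. To see it is onto, I would show that for each $\gamma < \delta$, the set of $p$ with $\gamma \in \ran(p(\delta))$ is dense by the same kind of one-point extension argument. This produces a surjection $\kappa \to \delta$ in $V[G]$, so $\delta$ is collapsed to have size $\le \kappa$; since this holds for all regular $\delta \in (\kappa,\lambda)$, every cardinal in the open interval $(\kappa,\lambda)$ is collapsed to $\kappa$.

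The main obstacle I anticipate is verifying the Easton condition is preserved under the unions and one-point extensions — specifically, making sure that "directed family of Easton sets" has Easton union, which is exactly where one uses that the index set has size $< \kappa$ together with regularity of the $\alpha$'s involved, and where one must be slightly careful that directedness (not merely a chain) still forces the relevant sections $\dom(p_i) \cap \alpha$ to form a directed, hence "eventually stabilizing in boundedness," family. The rest is routine density bookkeeping. I would also remark that the density arguments for collapsing only use $\kappa$-closure plus the freedom to adjoin values, so they go through unproblematically; the forcing is nontrivial precisely because $\lambda > \kappa$ guarantees there is at least one regular $\delta$ in the interval (indeed if $\lambda$ is a successor cardinal $\delta^+$ then $\delta$ itself lies in the closed interval and is collapsed, and if $\lambda$ is a limit the interval $(\kappa,\lambda)$ is cofinal in $\lambda$), so all cardinals strictly between $\kappa$ and $\lambda$ become non-cardinals.
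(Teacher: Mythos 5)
Your argument is correct and is the standard one; the paper in fact states this lemma without proof, so you are simply supplying the routine verification it leaves to the reader (union of a directed family of size $<\kappa$ is a condition, and density arguments give surjections from $\kappa$ onto each regular $\delta\in(\kappa,\lambda)$). The only point you gloss over is that singular cardinals $\nu\in(\kappa,\lambda)$ are not handled directly by your density argument, but they follow in one line: every ordinal below $\nu$ acquires size $\le\kappa$, so if $\nu$ survived it would be $(\kappa^+)^{V[G]}$ and hence regular, contradicting $\cf^{V[G]}(\nu)\le\cf^{V}(\nu)<\nu$.
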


\begin{lemma}
If $\kappa$ is regular and $\lambda>\kappa$ is Mahlo, then $\bbL(\kappa,\lambda)$ is $\lambda$-c.c.
\end{lemma}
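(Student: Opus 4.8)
The plan is to reflect a putative large antichain down to an inaccessible-sized initial piece of the forcing, running the usual $\lambda$-chain-condition argument for Easton-style posets; the one wrinkle, the uniform-support parameter $\xi<\kappa$ from clause (2) of the definition, turns out to be harmless because $\kappa$ is fixed and stays below the reflection point. I would begin with two size estimates. Since the domain of any condition is an Easton set of regular cardinals below $\lambda$ and $\lambda$ is regular, every $p\in\bbL(\kappa,\lambda)$ has $\sup\dom(p)<\lambda$; counting the possible domains together with the function parts (all of whose values have domain inside one $\xi<\kappa$) gives $|\bbL(\kappa,\lambda)|=\lambda$, using that $\lambda$ is inaccessible. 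The same computation below a cardinal $\gamma<\lambda$ shows the suborder $\bbL_\gamma:=\{r\in\bbL(\kappa,\lambda):\sup\dom(r)<\gamma\}$ has size at most $2^{|\gamma|}<\lambda$; note that when $\gamma$ is regular $\bbL_\gamma$ is literally $\bbL(\kappa,\gamma)$ sitting inside $\bbL(\kappa,\lambda)$ with the inherited order, since an Easton subset of $\gamma$ is bounded below $\gamma$.

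Now suppose toward a contradiction that $\bbL(\kappa,\lambda)$ has an antichain of size $\lambda$; extend it to a maximal antichain $A$, still of size $\lambda$ by the bound above, and for each $r\in\bbL(\kappa,\lambda)$ use maximality to fix some $p_r\in A$ compatible with $r$. Since each $\bbL_\gamma$ has size $<\lambda$ and $\lambda$ is regular, a routine closing-off argument produces a club $C\subseteq\lambda$ such that $\sup\dom(p_r)<\gamma$ whenever $\gamma\in C$ and $r\in\bbL_\gamma$. By Mahloness fix a regular $\gamma\in C$. Then $A_\gamma:=A\cap\bbL_\gamma$ is a maximal antichain of $\bbL_\gamma$: it is an antichain as a subset of $A$, and predense in $\bbL_\gamma$ because for $r\in\bbL_\gamma$ the condition $p_r$ is compatible with $r$ and lies in $A_\gamma$. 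But $|A_\gamma|\le|\bbL_\gamma|<\lambda=|A|$, so there is $p\in A$ with $\sup\dom(p)\ge\gamma$.

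It remains — and this is the only real content — to show $p$ is compatible with some $q\in A_\gamma$, which contradicts that $A$ is an antichain ($q\neq p$ automatically, as $\sup\dom(q)<\gamma\le\sup\dom(p)$). Put $p':=p\rest(\dom(p)\cap\gamma)$; since $\dom(p)$ is Easton and $\gamma$ regular, $\dom(p)\cap\gamma$ is bounded below $\gamma$, so $p'\in\bbL_\gamma$, and by maximality of $A_\gamma$ we may pick $q\in A_\gamma$ and $s\in\bbL_\gamma$ with $s\le q$ and $s\le p'$. Because $\dom(q)\subseteq\gamma$ we have $\dom(p)\cap\dom(q)=\dom(p')\cap\dom(q)$, and on this set both $p(\delta)=p'(\delta)$ and $q(\delta)$ are restrictions of $s(\delta)$, hence compatible as partial functions; so $r:=p\cup q$, with $r(\delta):=p(\delta)\cup q(\delta)$ at $\delta\in\dom(p)\cap\dom(q)$, is well defined. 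Its domain is Easton as a union of Easton sets, and each value has domain inside $\max(\xi_p,\xi_q)<\kappa$ and range inside $\delta$, where $\xi_p,\xi_q<\kappa$ witness clause (2) for $p$ and $q$; thus $r\in\bbL(\kappa,\lambda)$ with $r\le p$ and $r\le q$, giving the desired compatibility. The scaffolding (the two size bounds, the clubness of $C$) is routine; the one point to handle carefully is exactly this amalgamation across the uniform-support requirement, which goes through precisely because $\kappa$ is a fixed regular cardinal below $\gamma$.
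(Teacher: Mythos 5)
Your proof is correct and follows essentially the same route as the paper's: reflect the maximal antichain to a regular $\gamma<\lambda$ via a club and Mahloness, then derive a contradiction by restricting a condition with $\sup\dom(p)\ge\gamma$ to $\bbL(\kappa,\gamma)$ and amalgamating with a compatible element of the reflected antichain. The only differences are presentational — you argue by contradiction from an antichain of size $\lambda$ where the paper shows directly that $A\subseteq\bbL(\kappa,\alpha)$ for the least regular $\alpha$ in the club, and you spell out the amalgamation step that the paper leaves implicit.
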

\begin{proof}
    Let $S$ be the set of regular cardinals below $\lambda$.  Since $\lambda$ is Mahlo, $S$ is stationary in $\lambda$.  Let $A \subseteq \bbL(\kappa,\lambda)$ be a maximal antichain.  There is a club $C \subseteq \lambda$ such that for all $\alpha \in C$, $A \cap \bigcup_{\beta<\alpha} \bbL(\kappa,\beta)$ is a maximal antichain in $\bigcup_{\beta<\alpha} \bbL(\kappa,\beta)$.  If $\alpha \in S$, then $\bigcup_{\beta<\alpha} \bbL(\kappa,\beta) = \bbL(\kappa,\alpha)$ since all $p \in \bbL(\kappa,\alpha)$ have domain smaller than $\alpha$.  We claim that $A \subseteq \bbL(\kappa,\alpha)$, where $\alpha = \min(C \cap S)$, which implies $|A|<\lambda$.  Otherwise, there is some $p \in A \setminus \bbL(\kappa,\alpha)$.  But $p \rest \alpha \in \bbL(\kappa,\alpha)$, and thus $p$ is compatible with some $q \in A \cap \bbL(\kappa,\alpha)$, a contradiction.
\end{proof}

For cardinals $\kappa,\lambda$, the poset $\col(\kappa,\lambda)$ is defined as the collection of functions from $x \subseteq \kappa$ into $\lambda$, where $|x| <\kappa$, ordered by $p \leq q \leftrightarrow p \supseteq q$.  When $\kappa$ is regular, the set $\{ p \in \col(\kappa,\lambda)$ : $\dom(p)$ is an ordinal$\}$ is dense.

\begin{lemma}
\label{laverproj}
    Suppose $\mu<\kappa$ are regular, $\lambda>\kappa$, and $\delta^{<\kappa} = \delta$ for all regular $\delta\in(\kappa,\lambda)$.  Then there is a dense embedding from a dense subset of $\bbL(\mu,\lambda) \restriction [\kappa,\lambda)$ to $\col(\mu,\kappa) \times \bbL(\kappa,\lambda)$.
\end{lemma}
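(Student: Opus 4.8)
The goal is to decompose $\bbL(\mu,\lambda) \restriction [\kappa,\lambda)$ as (a dense subset of) $\col(\mu,\kappa) \times \bbL(\kappa,\lambda)$. The intuition is clear: a condition $p$ in $\bbL(\mu,\lambda) \restriction [\kappa,\lambda)$ consists of functions $p(\delta)$ for $\delta$ in an Easton set of regular cardinals in $(\kappa,\lambda)$, each $p(\delta)$ having domain bounded by some $\xi < \mu$ and range in $\delta$. The only way such a condition differs from a condition in $\bbL(\kappa,\lambda)$ is that the bounding ordinal $\xi$ for the domains lives below $\mu$ rather than below $\kappa$; but the ranges still go all the way up to each $\delta$. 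So the plan is to ``re-encode'' the range data. For each regular $\delta \in (\kappa,\lambda)$, since $\delta^{<\kappa} = \delta$, we may fix a bijection $h_\delta : \delta \to {}^{<\kappa}\kappa$ (or to $\kappa \times \delta$, or any convenient coding object), and use it to split the information in $p(\delta) : \xi \to \delta$ into a ``$\col(\mu,\kappa)$-part'' (which only depends on $\xi < \mu$, uniformly across $\delta$) and a ``$\bbL(\kappa,\lambda)$-part''.

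**The key construction.** First I would pass to the dense subset $D \subseteq \bbL(\mu,\lambda)\restriction[\kappa,\lambda)$ of conditions $p$ for which there is a single $\xi = \xi(p) < \mu$ serving as the bound in clause (2) and such that each $p(\delta)$ has domain exactly $\xi$ (one can always extend to such a condition by filling in with zeros, and extending a condition in this class stays in this class once we allow $\xi$ to grow to an ordinal). Then, given $p \in D$ with bound $\xi$ and domain $d = \dom(p)$, I want to produce a pair $(s,q)$ where $s \in \col(\mu,\kappa)$ has domain $\xi$ and $q \in \bbL(\kappa,\lambda)$ has domain $d$ with bound $\le \kappa$. The idea: think of each value $p(\delta)(\alpha) \in \delta$, using a fixed bijection $\delta \cong \kappa \times \delta$, as a pair $(\beta_{\delta,\alpha}, \gamma_{\delta,\alpha}) \in \kappa \times \delta$. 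Define $q(\delta)$ to be the function with domain $\xi' := \{\alpha < \kappa : \text{some clause holds}\}$... — but $\xi$ is below $\mu$, so I cannot let $q(\delta)$ depend on $\alpha$ beyond $\xi$. The correct move is: let $s$ encode, for each $\alpha < \xi$, a function $s(\alpha) : \mu \to ?$ — no. Let me restate cleanly: define $s : \xi \to \kappa$ by $s(\alpha) = $ the ``first coordinate block'' assembled from all the $\beta_{\delta,\alpha}$ as $\delta$ ranges over $d$; this needs $|d| < \kappa$ worth of data per $\alpha$, so I use a bijection of $\kappa$ with $(\text{finite-support or Easton-support sequences})$. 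This is getting delicate; the honest approach is:

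**Revised, cleaner construction.** For each regular $\delta \in (\kappa,\lambda)$ fix a bijection $e_\delta : \delta \to \delta \times \kappa$ (possible since $\delta \ge \kappa$ is a cardinal). Given $p \in D$ with bound $\xi < \mu$ and domain $d$: for $\delta \in d$ and $\alpha < \xi$ write $e_\delta(p(\delta)(\alpha)) = (a_{\delta,\alpha}, b_{\delta,\alpha}) \in \delta \times \kappa$. Define $q \in \bbL(\kappa,\lambda)$ by $\dom(q) = d$ and $q(\delta)(\alpha) = a_{\delta,\alpha}$ for $\alpha < \xi$ — but wait, $\xi < \mu < \kappa$, so this is a legitimate condition in $\bbL(\kappa,\lambda)$ with bound $\xi \le \kappa$; good. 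Define $s \in \col(\mu,\kappa)$ by choosing, via a fixed bijection of $\kappa$ with the set of Easton-support partial functions from (regular cardinals $< \lambda$) to $\kappa$ of size $< \kappa$, a single ordinal $s(\alpha) < \kappa$ coding the sequence $\la b_{\delta,\alpha} : \delta \in d\ra$, for each $\alpha < \xi$. Then $s$ has domain $\xi < \mu$ with range in $\kappa$, so $s \in \col(\mu,\kappa)$. The map $p \mapsto (s, q)$ is the candidate dense embedding. I would then check: it is order-preserving (a stronger $p' \le p$ has $\xi' \ge \xi$, $d' \supseteq d$, and extends each $p(\delta)$; the coded pair $(s',q')$ correspondingly satisfies $s' \supseteq s$ and $q' \le q$ — this uses that the coding bijections $e_\delta$ are fixed once and for all and that the ``$\kappa$-coding bijection'' is fixed); it is injective (invert using the $e_\delta$'s and the decoding); and its range is dense in $\col(\mu,\kappa) \times \bbL(\kappa,\lambda)$ (given $(s_0, q_0)$, extend $s_0$ to have domain some ordinal $\xi_0 < \mu$ and $q_0$ to have bound an ordinal $\xi_1 < \kappa$; if $\xi_0 < \xi_1$ there's a mismatch, so actually I must be more careful — the range will consist exactly of pairs $(s,q)$ where $\dom(s)$ and $\dom(q(\delta))$ agree and equal a common $\xi < \mu$; so to get density I extend any $(s_0,q_0)$ to such a matched pair, which is possible by padding, and this matched pair is in the range).

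**Main obstacle.** The real subtlety — and where I expect to spend effort — is the interaction between the two support notions. In $\col(\mu,\kappa)$ the support of $s$ is an ordinal $< \mu$, while in $\bbL(\kappa,\lambda)$ the bound on the $q(\delta)$'s is an ordinal $< \kappa$; but the factor $\col(\mu,\kappa)$ only ``sees'' the first $\mu$ coordinates. So the genuine content is that the data in $\bbL(\mu,\lambda)\restriction[\kappa,\lambda)$ beyond coordinate $\mu$ is \emph{vacuous} — indeed it is, since every condition has bound $\xi < \mu$! That is the crux: clause (2) for $\bbL(\mu,\lambda)$ forces \emph{all} domains below $\mu$, so there is no ``$\bbL$ between $\mu$ and $\kappa$'' content to capture, only range content, which is exactly what $\col(\mu,\kappa)$ absorbs via the cardinal arithmetic $\delta^{<\kappa} = \delta$ (used to get the bijections $e_\delta$, and to code $< \kappa$-sequences of elements of $\kappa$ by single elements of $\kappa$, i.e. $\kappa^{<\kappa}$-type counting packaged into $\col(\mu,\kappa)$). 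I would make sure the coding bijections are chosen coherently (e.g. so that restriction of $p$ to a sub-domain $d' \subseteq d$ maps to the restriction of the codes), so that order-preservation in both directions is transparent rather than a computation. Once the bookkeeping is fixed, verifying the projection/dense-embedding axioms is routine.
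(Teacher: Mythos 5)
Your construction has a fatal gap at the density-of-range step, and the ``padding'' you propose cannot repair it. Your map sends a condition $p$ with uniform domain $\xi<\mu$ to a pair $(s,q)$ in which $\dom(q(\delta))=\xi<\mu$ for every $\delta$, so every pair in your range has its $\bbL(\kappa,\lambda)$-coordinate supported below $\mu$. The set of such conditions is not dense in $\bbL(\kappa,\lambda)$: a condition $q_0$ with $\dom(q_0(\delta))=\mu$ is perfectly legitimate there (the bound $\xi$ in clause (2) only needs to be below $\kappa$), and extending a condition only enlarges domains, so no extension of $(s_0,q_0)$ can land in your range. The ``matched pair'' you hope to reach would require shrinking $\dom(q_0(\delta))$ back below $\mu$, which is impossible. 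Conceptually, the generic for the second factor must build functions with domain cofinal in $\kappa$, but your pointwise re-encoding produces exactly one point of $\dom(q(\delta))$ per point of $\dom(p(\delta))$, and $\dom(p(\delta))<\mu$ always; the domains can never get past $\mu$. The paper's proof is built around precisely the stretching device you are missing: the $\col(\mu,\kappa)$-coordinate is simply $r(\kappa)$ (note that the coordinate of $\bbL(\mu,\lambda)\restriction[\kappa,\lambda)$ at $\kappa$ itself already literally is a $\col(\mu,\kappa)$-condition, so no coding is needed to produce the first factor), and from $p=r(\kappa)$ one builds a closed bounded $C_p\subseteq\kappa$ whose successive gaps have length $1+p(\beta)$. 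Each single value $q(\delta)(i)<\delta$ is then expanded into an entire function from the block $[C_p(i),C_p(i+1))$ into $\delta$, using fixed bijective enumerations $\la f^{\alpha,\beta,\delta}_i : i<\delta\ra$ of the functions from $[\alpha,\beta)$ to $\delta$ (this is where $\delta^{<\kappa}=\delta$ enters). Since the values of $p$ are arbitrary ordinals below $\kappa$, $\sup(C_p)$ ranges cofinally in $\kappa$, so the image domains reach every ordinal below $\kappa$ and the range is dense.

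There is a second, independent defect: your first coordinate $s(\alpha)$ codes the sequence $\la b_{\delta,\alpha}:\delta\in d\ra$ where $d=\dom(p)$. Passing to a stronger $p'\le p$ with $\dom(p')\supsetneq d$ forces $s'(\alpha)$ to code a strictly longer sequence, so under any injective coding $s'(\alpha)\ne s(\alpha)$; then $s'$ does not extend $s$ as a partial function and order-preservation fails. No ``coherent'' choice of bijections fixes this, because a $\col(\mu,\kappa)$-condition can only be strengthened by adding new coordinates, never by revising values already assigned. In the paper's decomposition the $\col(\mu,\kappa)$-coordinate carries no information about $\dom(r)$ at all, so the issue does not arise.
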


\begin{proof}
    The first coordinate of $\bbL(\mu,\lambda) \restriction [\kappa,\lambda)$ is $\col(\mu,\kappa)$.  From any condition $p \in \col(\mu,\kappa)$ whose domain is an ordinal, we derive a closed bounded subset $C_p \subseteq \kappa$ of ordertype $\dom(p)+1$ recursively as follows.  Let $C_p(0) = 0$.  Suppose $\alpha \leq \dom(p)$ and we have defined the first $1+\alpha$ elements of $C_p$.  If $\alpha$ is a limit, let $C_p(\alpha) = \sup_{\beta<\alpha} C_p(\beta)$.  If $\alpha = \beta+1$, let $C_p(\alpha) = C_p(\beta) + 1 + p(\beta)$.  Note that $p$ can be recovered from $C_p$, and if $p \leq q$, then $C_q$ is an initial segment of $C_p$.

     For each regular $\delta\in(\kappa,\lambda)$ and each pair $\alpha<\beta<\kappa$ choose a bijective enumeration $\la f^{\alpha,\beta,\delta}_i : i < \delta \ra$ of all functions from $[\alpha,\beta)$ to $\delta$.  Here, we use the assumption that $\delta^{<\kappa} = \delta$.

     Let us define a map $\pi : \bbL(\mu,\lambda) \restriction [\kappa,\lambda) \to \col(\mu,\kappa) \times \bbL(\kappa,\lambda)$ on the dense set of conditions $r$ such that $\kappa \in \dom(r)$ and there is $\gamma < \mu$ such that $\dom(r(\delta)) = \gamma$ for all $\delta \in \dom(r)$.   
     For such an $r$, let $p = r(\kappa)$ and let $q = r \rest (\kappa,\lambda)$.  Let $X = \dom(q)$, and let $\gamma = \dom(p)$, which is also equal to $\dom(q(\delta))$ for all $\delta \in X$.  Let $\pi(r) = \la p,s \ra$, where $s$ is defined as follows:  $\dom(s) = X$, and for each $\delta \in X$, $s(\delta) : \sup(C_p) \to \delta$ is the function such that for all $i<\gamma$,
     $$s(\delta) \restriction [C_p(i),C_p(i+1)) = f^{C_p(i),C_p(i+1),\delta}_{q(\delta)(i)}$$

     It is clear that $\pi$ is an order-preserving injection. 
     To show that $\pi$ is an order-isomorphism with its range, suppose $\pi(r_1) \leq \pi(r_0)$.  For $i=0,1$, let $q_i = r_i \rest (\kappa,\lambda)$, $\la p_i,s_i \ra = \pi(r_i)$, and $\gamma_i = \dom(p_i)$.  We have that $p_1 \leq p_0$, and thus $C_{p_0}$ is an initial segment of $C := C_{p_1}$.  Since $s_1 \leq s_0$, it follows that for all $\delta \in \dom(s_0)$ and all $j<\gamma_0$, 
          $$f^{C(j),C(j+1),\delta}_{q_0(\delta)(j)} = f^{C(j),C(j+1),\delta}_{q_1(\delta)(j)}$$
    Since the enumerations $\la f^{\alpha,\beta,\delta}_i : i < \delta \ra$ are one-to-one, it must be the case that for all $\delta \in \dom(q_0) = \dom(s_0)$ and all $j<\gamma_0$, $q_0(\delta)(j) = q_1(\delta)(j)$.  Thus $q_1 \leq q_0$, and so $r_1 \leq r_0$.

     Finally, we show that $\ran(\pi)$ is dense.  Suppose $\la p,s \ra \in \col(\mu,\kappa) \times \bbL(\kappa,\lambda)$.  Let $\gamma = \dom(p)$ and let $X = \dom(s)$.  By strengthening the condition if necessary, we may assume that for all $\delta \in X$, $\dom(s(\delta))=\sup(C_p)$.  We can find $q \in \bbL(\mu,\lambda) \rest (\kappa,\lambda)$ such that $\dom(q) = X$, $\dom(q(\delta)) = \gamma$ for all $\delta \in X$, and for all $\delta \in X$ and $i<\gamma$, $q(\delta)(i)$ is the unique ordinal $j<\delta$ such that
     $$s(\delta) \restriction [C_p(i),C_p(i+1)) = f^{C_p(i),C_p(i+1),\delta}_j$$
     If $r \in \bbL(\mu,\lambda) \rest [\kappa,\lambda)$ is the condition such that $r(\kappa) = p$ and $r \restriction (\kappa,\lambda) = q$, then $\pi(r) = \la p,s \ra$.
     \end{proof}

\begin{corollary}
\label{densemb}
    Suppose $\mu<\kappa$ are regular, $\lambda>\kappa$, and $\delta^{<\kappa} = \delta$ for all regular $\delta\in(\kappa,\lambda)$.  Then there is a dense  $D \subseteq \bbL(\mu,\lambda)$ and a projection $\pi_\kappa : D \to \bbL(\kappa,\lambda)$ such that 
    $$ p \mapsto \la p \rest \kappa, p(\kappa), \pi_\kappa(p) \ra $$
    is a dense embedding from $D$ to $\bbL(\mu,\kappa) \times \col(\mu,\kappa) \times \bbL(\kappa,\lambda)$.
    \end{corollary}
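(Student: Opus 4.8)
The proof is essentially an assembly of Lemma~\ref{laverproj} with an elementary factoring of $\bbL(\mu,\lambda)$. The plan is first to isolate a dense $D\subseteq\bbL(\mu,\lambda)$ on which $\bbL(\mu,\lambda)$ literally factors as a direct product $\bbL(\mu,\kappa)\times\bigl(\bbL(\mu,\lambda)\rest[\kappa,\lambda)\bigr)$, and then to apply the lemma to the second factor.

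First I would set
\[
D=\bigl\{\, p\in\bbL(\mu,\lambda) : \kappa\in\dom(p)\text{ and }\exists\gamma<\mu\ \forall\delta\in\dom(p)\cap[\kappa,\lambda)\ \dom(p(\delta))=\gamma \,\bigr\},
\]
and check that $D$ is dense: given any condition, add $\kappa$ to its domain with value $\emptyset$, and then pad each value $p(\delta)$ for $\delta\geq\kappa$ with zeros until they all have domain $\gamma:=\sup\{\dom(p(\delta)):\delta\in\dom(p)\cap[\kappa,\lambda)\}<\mu$. Note $\{(\kappa,\emptyset)\}$ is the maximum of $D$, so $D$ is a poset with a top element, as the definition of projection requires. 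For $p\in D$, write $p_{<}=p\rest\bigl(\dom(p)\cap\kappa\bigr)$ and $p_{\geq}=p\rest\bigl(\dom(p)\cap[\kappa,\lambda)\bigr)$. One checks that $p_{<}\in\bbL(\mu,\kappa)$, that $p_{\geq}$ lies in the dense subset $D_1\subseteq\bbL(\mu,\lambda)\rest[\kappa,\lambda)$ on which the dense embedding of Lemma~\ref{laverproj}, call it $\sigma$, is defined, and that $p\mapsto\la p_{<},p_{\geq}\ra$ is an order isomorphism of $D$ onto $\bbL(\mu,\kappa)\times D_1$ with inverse $\la a,b\ra\mapsto a\cup b$. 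The one point worth checking here is that $a\cup b$ is again a condition, which holds because a union of two Easton sets of regular cardinals in $(\mu,\lambda)$ is Easton and one may take the larger of the two witnessing ordinals from clause~(2) of the definition of the Laver collapse.

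Writing $\sigma(r)=\la r(\kappa),s_r\ra$ as in the proof of Lemma~\ref{laverproj} and setting $\pi_\kappa(p)=s_{p_{\geq}}$, the map
\[
p\ \longmapsto\ \la p_{<},p_{\geq}\ra\ \longmapsto\ \la p_{<},\sigma(p_{\geq})\ra=\la p\rest\kappa,\ p(\kappa),\ \pi_\kappa(p)\ra
\]
is the composition of the isomorphism above with $\mathrm{id}_{\bbL(\mu,\kappa)}\times\sigma$. Since a product of dense embeddings is a dense embedding and dense embeddings compose, this composite is a dense embedding $D\to\bbL(\mu,\kappa)\times\col(\mu,\kappa)\times\bbL(\kappa,\lambda)$, which is the dense embedding asserted in the statement. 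Finally, $\pi_\kappa$ is the composition of this dense embedding with the coordinate projection onto $\bbL(\kappa,\lambda)$; since a dense embedding is a projection onto its (dense) target, a coordinate projection of a product is a projection, and projections compose, $\pi_\kappa\colon D\to\bbL(\kappa,\lambda)$ is a projection. There is no serious obstacle, as all the real content sits in Lemma~\ref{laverproj}; the only point requiring a moment's care is the choice of $D$, which must at once make $\bbL(\mu,\kappa)$ split off as an honest direct factor and keep $p_{\geq}$ in the domain $D_1$ of $\sigma$.
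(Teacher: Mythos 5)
Your proof is correct and is exactly the intended argument: the paper states this as an immediate corollary of Lemma~\ref{laverproj}, obtained by splitting a condition at $\kappa$ into its $\bbL(\mu,\kappa)$ part and its $[\kappa,\lambda)$ part and applying the lemma's dense embedding to the latter. Your choice of $D$, the density check, and the bookkeeping about tops and compositions of projections all match what the paper leaves implicit.
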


\section{The main result}

Suppose $\bbP$ is a poset and $\dot\bbQ$ is a $\bbP$-name for a poset.  The \emph{term forcing} $T(\bbP,\dot\bbQ)$ is the collection of all $\bbP$-names for elements of $\dot\bbQ$ (which are members of $H_\theta$, where $\theta = (2^{\rank(\{\bbP,\dot\bbQ\})})^+$), ordered by $\dot q_1 \leq \dot q_0$ when $1_\bbP \Vdash \dot q_1 \leq_\bbP \dot q_0$.  This notion as well as the following fact are due to Laver.

\begin{prop}[Laver]
\label{laver}
The identity map is a projection from $\bbP \times T(\bbP,\dot\bbQ)$ to $\bbP * \dot\bbQ$.
\end{prop}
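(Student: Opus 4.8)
The plan is to verify the three clauses defining a projection directly for the map $\pi$ that sends a condition of $\bbP \times T(\bbP,\dot\bbQ)$ to itself, viewed as a condition of $\bbP * \dot\bbQ$. First one checks that this makes sense: if $(p,\dot q)\in \bbP\times T(\bbP,\dot\bbQ)$ then $\dot q$ is a $\bbP$-name with $1_\bbP \Vdash \dot q\in\dot\bbQ$, hence $p\Vdash\dot q\in\dot\bbQ$, so $(p,\dot q)$ is a condition of $\bbP*\dot\bbQ$ (if the ambient presentation of $\bbP*\dot\bbQ$ restricts to names of bounded rank, first replace $\dot q$ by an equivalent such name; this is harmless and I suppress it). Order-preservation and preservation of the maximum are then immediate: if $(p_1,\dot q_1)\le(p_0,\dot q_0)$ in $\bbP\times T(\bbP,\dot\bbQ)$, meaning $p_1\le p_0$ and $1_\bbP\Vdash\dot q_1\le_{\dot\bbQ}\dot q_0$, then a fortiori $p_1\Vdash\dot q_1\le_{\dot\bbQ}\dot q_0$, which is exactly the statement that $(p_1,\dot q_1)\le(p_0,\dot q_0)$ in $\bbP*\dot\bbQ$; and the top element of $T(\bbP,\dot\bbQ)$ is a canonical name $\dot 1$ for $1_{\dot\bbQ}$, so $\pi$ carries $(1_\bbP,\dot 1)$ to $(1_\bbP,\dot 1)=1_{\bbP*\dot\bbQ}$.

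The only clause with real content is the lifting property. Suppose $(p',\dot q')\le(p,\dot q)$ in $\bbP*\dot\bbQ$ with $(p,\dot q)\in\bbP\times T(\bbP,\dot\bbQ)$, so that $p'\le p$, $p'\Vdash\dot q'\le_{\dot\bbQ}\dot q$, $p'\Vdash\dot q'\in\dot\bbQ$, and $1_\bbP\Vdash\dot q\in\dot\bbQ$. Extend $\{p'\}$ to a maximal antichain $A=\{p'\}\cup A'$ of $\bbP$, so every $a\in A'$ is incompatible with $p'$, and let $\dot q''$ be the name obtained by mixing $\dot q'$ and $\dot q$ along $A$: concretely, put $\dot q''=\{(\sigma,r):r\le p'\text{ and }(\exists r_0\ge r)\,(\sigma,r_0)\in\dot q'\}\cup\{(\sigma,r):(\exists a\in A')\,r\le a\text{ and }(\exists r_0\ge r)\,(\sigma,r_0)\in\dot q\}$. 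A routine genericity argument then gives $p'\Vdash\dot q''=\dot q'$ and $a\Vdash\dot q''=\dot q$ for every $a\in A'$. Since $A$ is maximal and both $\dot q'$ below $p'$ and $\dot q$ outright are forced to lie in $\dot\bbQ$, we get $1_\bbP\Vdash\dot q''\in\dot\bbQ$, i.e.\ $\dot q''\in T(\bbP,\dot\bbQ)$; and since $p'\Vdash\dot q''=\dot q'\le_{\dot\bbQ}\dot q$ while each $a\in A'$ forces $\dot q''=\dot q\le_{\dot\bbQ}\dot q$, we get $1_\bbP\Vdash\dot q''\le_{\dot\bbQ}\dot q$. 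Hence $(p',\dot q'')\in\bbP\times T(\bbP,\dot\bbQ)$, it satisfies $(p',\dot q'')\le(p,\dot q)$ there (using $p'\le p$ and $1_\bbP\Vdash\dot q''\le_{\dot\bbQ}\dot q$), and $\pi(p',\dot q'')=(p',\dot q'')\le(p',\dot q')$ in $\bbP*\dot\bbQ$ because $p'\Vdash\dot q''=\dot q'$. So $(p',\dot q'')$ is the witness required by the projection condition.

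I expect the only point needing care, and the closest thing to an obstacle, to be the bookkeeping around the definition of $T(\bbP,\dot\bbQ)$ as a set of names inside a fixed $H_\theta$: one must confirm that the mixed name $\dot q''$ can be taken in that $H_\theta$. This is clear, since the rank of $\dot q''$ is bounded by a simple function of the ranks of $\dot q$, of $\dot q'$ (whose rank may be taken bounded after replacing $\dot q'$ by an equivalent nice name), and of conditions of $\bbP$, all of which are already below the threshold used in the definition. Everything else is a direct unwinding of the forcing relation, together with the standard fact that a statement forced by each element of a maximal antichain is forced by $1_\bbP$.
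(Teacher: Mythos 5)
Your argument is correct. It is the standard mixing argument: extend $p'$ to a maximal antichain, mix $\dot q'$ with $\dot q$ along it to get a term $\dot q''$ with $1_\bbP\Vdash\dot q''\le\dot q$ and $p'\Vdash\dot q''=\dot q'$, and use $(p',\dot q'')$ as the witness for the projection property. There is nothing in the paper to compare it against: the proposition is stated without proof and attributed to Laver, so your write-up simply supplies the standard folklore verification, including the (correctly dispatched) bookkeeping about the mixed name living in the $H_\theta$ used to define $T(\bbP,\dot\bbQ)$.
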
 
Proof of the following is exactly as in \cite{shioya2020}.

\begin{lemma}
\label{term}
    Suppose $\kappa$ is regular, $\mathbb P$ is a $\kappa$-c.c.\ poset of size $\kappa$, $\lambda>\kappa$, and $\delta^{<\kappa} = \delta$ for all regular $\delta \in (\kappa,\lambda)$.  Then there is a dense embedding from $\bbL(\kappa,\lambda)$ to $T(\mathbb P,\dot{\bbL}(\kappa,\lambda))$.
\end{lemma}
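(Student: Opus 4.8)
The plan is to build an explicit dense embedding $\Phi : \bbL(\kappa,\lambda) \to T(\mathbb P,\dot\bbL(\kappa,\lambda))$ that re-codes the values of a ground-model condition through fixed bijections. The enabling observation is a name count: since $\mathbb P$ is $\kappa$-c.c.\ of size $\kappa$, every antichain of $\mathbb P$ has size $<\kappa$, so $\mathbb P$ has at most $\kappa^{<\kappa}$ antichains; and since $\delta^{<\kappa}=\delta$ for regular $\delta\in(\kappa,\lambda)$ we get $\kappa^{<\kappa}\le\delta$, so for each such $\delta$ there are exactly $\delta$ many $\mathbb P$-names for ordinals below $\delta$, up to the equivalence ``$1_{\mathbb P}$ forces them equal'' (the same count yields $V^{\mathbb P}\models\delta^{<\kappa}=\delta$, so $\dot\bbL(\kappa,\lambda)$ has in $V^{\mathbb P}$ the shape it has in $V$, and by $\kappa$-c.c.\ the regular cardinals of $V^{\mathbb P}$ in $(\kappa,\lambda)$ are exactly those of $V$). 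Working in $V$, fix for each regular $\delta\in(\kappa,\lambda)$ a set $\calN_\delta$ of nice names, one per equivalence class, together with a bijection $b_\delta : \delta \to \calN_\delta$.

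Given $p\in\bbL(\kappa,\lambda)$ with $\dom(p)=X$ and Easton witness $\xi<\kappa$, let $\Phi(p)=\dot q_p$ be the $\mathbb P$-name forced to be the function with domain $X$ such that, for $\delta\in X$, $\dot q_p(\delta)$ is the function with domain $\dom(p(\delta))$ sending $i$ to the ordinal named by $b_\delta(p(\delta)(i))$. Since $X$ remains an Easton set of regular cardinals in $(\kappa,\lambda)$ in $V^{\mathbb P}$ and each $b_\delta(p(\delta)(i))$ names an ordinal below $\delta$, $1_{\mathbb P}$ forces $\dot q_p\in\dot\bbL(\kappa,\lambda)$ with the same witness $\xi$, and $\dot q_p$ lies in the relevant $H_\theta$. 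If $p\le q$ then $X_q\subseteq X_p$ and $q(\delta)\subseteq p(\delta)$ for $\delta\in X_q$, so $1_{\mathbb P}\Vdash\dot q_q(\delta)\subseteq\dot q_p(\delta)$ and $\dot q_p\le_T\dot q_q$; conversely, if $\dot q_p\le_T\dot q_q$, comparing the ground-model sets $X_p,X_q$ and $\dom(p(\delta)),\dom(q(\delta))$ and using that distinct members of $\calN_\delta$ are never forced equal recovers $q(\delta)\subseteq p(\delta)$, hence $p\le q$. So $\Phi$ is an order isomorphism onto its range (and sends $1_{\bbL}$ to $1_T$).

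Finally, to see $\ran(\Phi)$ is dense, given a term $\dot q$ I would strengthen it in the term order in three steps: (i) replace $\dom(\dot q)$ by the potential domain $E=\{\delta : \text{some } r\in\mathbb P \text{ forces } \delta\in\dom(\dot q)\}$, padding new coordinates with the empty function; (ii) fix, via $\kappa$-c.c., a single $\xi<\kappa$ bounding all $\dom(\dot q(\delta))$ in $V$ and extend each value to a total function on $\xi$; (iii) replace each value by an equivalent nice name. Each coordinate of the resulting term $\dot q^{\ast}$ then decomposes into $\xi$ many nice names for ordinals below $\delta$, each forced equal to a unique member of $\calN_\delta$; taking $b_\delta$-preimages defines $p\in\bbL(\kappa,\lambda)$ with $1_{\mathbb P}\Vdash\dot q_p=\dot q^{\ast}\le\dot q$, so $\dot q_p\le_T\dot q$. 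The one step that requires the size hypothesis on $\mathbb P$ (not just its chain condition) — and is the main thing to check — is that $E$ is an Easton set in $V$: if $\alpha>\kappa$ were $V$-regular with $\sup(E\cap\alpha)=\alpha$, then as $|\mathbb P|=\kappa<\alpha$ a single $r\in\mathbb P$ would force cofinally many ordinals below $\alpha$ into $\dom(\dot q)$, contradicting that $\dom(\dot q)$ is forced to be Easton; everything else is bookkeeping, chiefly that the three strengthenings are realized by $\mathbb P$-names of bounded rank and hence genuinely live in the term forcing.
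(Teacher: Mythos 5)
Your proof is correct and follows essentially the same strategy as the paper: fix, for each regular $\delta\in(\kappa,\lambda)$, a $\delta$-indexed complete set of representatives of $\mathbb P$-names for ordinals below $\delta$, re-code conditions coordinatewise through these, and for density reduce an arbitrary term to one with ground-model Easton domain and uniformly bounded coordinate domains before decoding. The only (immaterial) divergence is that you derive the Easton-ness of the potential domain $E$ from $|\mathbb P|=\kappa$, whereas the paper gets it from the $\kappa$-c.c.\ alone; the size hypothesis is really needed for the name count, which you also carry out.
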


\begin{proof}
    For each regular $\delta \in (\kappa,\lambda)$, choose an injective enumeration of a complete set of representatives of $\bbP$-names for ordinals below $\delta$, $\la \tau_i^\delta : i < \delta \ra$.  For each $q \in \bbL(\kappa,\lambda)$, let $\dot q$ be a $\bbP$-name for the function such that $1 \Vdash \dom \dot q = \dom \check q$, for all $\delta \in \dom (q)$, $1 \Vdash \dom \dot q(\delta) = \dom \check q(\delta)$, and for all $\delta \in \dom(q)$ and $\alpha \in \dom q(\delta)$, $1 \Vdash \dot q(\delta)(\alpha) = \tau_{q(\delta)(\alpha)}^\delta$.

    It is clear that the map $q \mapsto \dot q$ is an order-isomorphism with its range.  
    To show it is dense, suppose $\sigma \in T(\bbP,\dot\bbL(\kappa,\lambda))$.  Using the $\kappa$-c.c., there is an Easton set $X \subseteq \lambda$ and an ordinal $\gamma < \kappa$ such that $1 \Vdash \dom \sigma \subseteq \check X$, and for each $\delta \in \dom \sigma$, $\dom \sigma(\delta) \subseteq \check\gamma$. For $\delta \in X$ and $\alpha<\gamma$, let $\sigma_{\delta\alpha}$ be a $\bbP$-name for an ordinal that is forced to be equal to $\sigma(\delta)(\alpha)$ if this is defined, and otherwise equal to 0.  
    Define $q \in \bbL(\kappa,\lambda)$ as follows:  $\dom q = X$, $\dom q(\delta) = \gamma$ for all $\gamma \in X$, and for $\la \delta,\alpha \ra \in X \times \gamma$, $q(\delta)(\alpha)$ is the unique $i$ such that $1 \Vdash \sigma_{\delta\alpha} = \tau_i^\delta$.  It is straightforward to check that $1 \Vdash \dot q \leq \sigma$.
\end{proof}

Recall that Chang's conjecture $(\kappa_1,\kappa_0) \twoheadrightarrow (\mu_1,\mu_0)$ states that for all structures $\frak A$ on $\kappa_1$ in a countable language, there is a elementary substructure $\frak B \prec \frak A$ of size $\mu_1$ such that $| \frak B \cap \kappa_0 | = \mu_0$.  The following is the main result of this article:  

\begin{theorem}
\label{main}
Assume GCH and $j : V \to M$ is a huge embedding with $\mathrm{crit}(j) = \kappa$ and $j(\kappa) = \lambda$.  Let $\mu<\kappa$ be regular.  Then $\bbL(\mu,\kappa) * \dot\bbL(\kappa,\lambda)$ forces:
\begin{enumerate}
    \item $\kappa = \mu^+$ and $\lambda = \mu^{++}$.
    \item $(\mu^{++},\mu^+) \twoheadrightarrow (\mu^+,\mu)$.
    \item There is a normal ideal $I$ on $\kappa$ such that $\p(\kappa)/I \cong   \bbQ \times \col(\mu,\kappa)$, where $\bbQ$ is a $\mu$-closed, $\kappa$-distributive, $\lambda$-c.c.\ poset.
\end{enumerate}
\end{theorem}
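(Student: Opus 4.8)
The plan is to build a generic elementary embedding of $V[G*H]$, where $G*H$ is $\bbL(\mu,\kappa)*\dot\bbL(\kappa,\lambda)$-generic, by lifting $j$ through the two-step iteration, and to extract all three conclusions from it. Part (1) should be routine: $\bbL(\mu,\kappa)$ is $\kappa$-c.c.\ (as $\kappa$ is measurable, hence Mahlo) of size $\kappa$ and collapses $(\mu,\kappa)$ onto $\mu$, so $\kappa=\mu^+$ in $V[G]$; then $\bbL(\kappa,\lambda)$ is $\kappa$-directed-closed, so preserves $\mu^+$, and is $\lambda$-c.c.\ because $\lambda=j(\kappa)$ is Mahlo in $M$ and hence in $V$ (as $\p(\lambda)^M=\p(\lambda)^V$ by $M^\lambda\subseteq M$), so it preserves $\lambda$ while collapsing $(\kappa,\lambda)$ onto $\kappa$; thus $\kappa=\mu^+$ and $\lambda=\mu^{++}$ after the full iteration.

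For (2) and (3) I would first analyze $j(\bbL(\mu,\kappa)*\dot\bbL(\kappa,\lambda))=\bbL(\mu,\lambda)*\dot\bbL(\lambda,j(\lambda))$ (computed in $M$; note $\bbL(\mu,\lambda)$ is the same poset in $M$ and $V$ by $M^\lambda\subseteq M$, and $j\restriction\bbL(\mu,\kappa)$ is the identity since $\mathrm{crit}(j)=\kappa$). Corollary~\ref{densemb} gives $\bbL(\mu,\lambda)\cong\bbL(\mu,\kappa)\times\col(\mu,\kappa)\times\bbL(\kappa,\lambda)$ via a dense embedding whose first coordinate is $p\mapsto p\restriction\kappa$, and Proposition~\ref{laver} together with Lemma~\ref{term} (applicable since $\bbL(\mu,\kappa)$ is $\kappa$-c.c.\ of size $\kappa$) gives a projection $\bbL(\mu,\kappa)\times\bbL(\kappa,\lambda)\to\bbL(\mu,\kappa)*\dot\bbL(\kappa,\lambda)$. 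Combining these, I would show that $j(\bbL(\mu,\kappa)*\dot\bbL(\kappa,\lambda))$ is the image of a projection from a poset of the form $(\bbL(\mu,\kappa)*\dot\bbL(\kappa,\lambda))*(\dot\bbQ\times\col(\mu,\kappa))$, with $\dot\bbQ$ a $(\bbL(\mu,\kappa)*\dot\bbL(\kappa,\lambda))$-name, in such a way that $j$ restricted to $\bbL(\mu,\kappa)*\dot\bbL(\kappa,\lambda)$ is the inclusion into the first factor. Forcing over $V[G*H]$ with $\dot\bbQ^{G*H}\times\col(\mu,\kappa)$ then yields a generic $\hat K$ for $j(\bbL(\mu,\kappa)*\dot\bbL(\kappa,\lambda))$ over $V$ — hence over $M$ — with $j[G*H]\subseteq\hat K$, and I lift $j$ to $\hat\jmath:V[G*H]\to M[\hat K]=:N$ with $\hat\jmath\restriction V=j$, so that $\mathrm{crit}(\hat\jmath)=\kappa$ and $\hat\jmath(\kappa)=\lambda$.

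The technical heart is making the previous paragraph precise, and in particular lifting $j$ through the second step $\dot\bbL(\kappa,\lambda)$: after lifting through $\bbL(\mu,\kappa)$ to $\hat\jmath_1:V[G]\to M[\hat G]$ — choosing the generic so that the $\bbL(\kappa,\lambda)$-coordinate of the corresponding product-generic is $H$ itself — one needs a generic for $\hat\jmath_1(\bbL(\kappa,\lambda))$ over $M[\hat G]$ containing $\hat\jmath_1[H]$. Here $m:=\bigcup\hat\jmath_1[H]$ should be a genuine condition: its domain is the $\hat\jmath_1$-image of the set of all $V[G]$-regular cardinals in $(\kappa,\lambda)$, which is Easton in $j(\lambda)$ because its preimage is bounded below every $V$-regular cardinal, and at each coordinate $m$ is a function of width $\kappa<\lambda$ since $H$ meets every dense set widening a fixed coordinate; moreover $m\in M[\hat G]$ because $\hat\jmath_1\restriction\lambda\in M$ and $H\in M[\hat G]$. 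However, forcing below $m$ with all of $\hat\jmath_1(\bbL(\kappa,\lambda))$ would not be $\lambda$-c.c., so isolating the correct quotient $\bbQ$ and verifying that it is $\mu$-closed, $\kappa$-distributive, and — crucially — $\lambda$-c.c.\ is where the real work lies; I expect this to proceed via term-forcing representations (Lemma~\ref{term}) and the Mahlo-ness of $\lambda$ and $j(\lambda)$, following the strategy of \cite{shioya2020,eh} but using Corollary~\ref{densemb} to account for the extra $\col(\mu,\kappa)$ factor.

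Granting this, (3) follows from a duality argument: put $I=\{A\subseteq\kappa: 1\Vdash_{\dot\bbQ^{G*H}\times\col(\mu,\kappa)}\kappa\notin\hat\jmath(A)\}$; then $I$ is a normal, proper ideal on $\kappa$ ($\kappa$-completeness and normality from $\mathrm{crit}(\hat\jmath)=\kappa$ and the $\lambda$-closure of $N$), and a density argument matching a condition $(q,c)$ with the $I$-positive set $\{\alpha<\kappa:(q,c)\Vdash\alpha\in\hat\jmath(\dot A)\}$, for suitable names $\dot A$, identifies $\p(\kappa)/I$ with $\bbQ\times\col(\mu,\kappa)$. For (2), given a structure $\frak A$ on $\lambda=\mu^{++}$ in $V[G*H]$, it suffices by elementarity of $\hat\jmath$ to exhibit in $N$ an elementary substructure of $\hat\jmath(\frak A)$ of size $\lambda=\hat\jmath(\kappa)$ meeting $\hat\jmath(\kappa)$ in a set of size $\mu=\hat\jmath(\mu)$; and $\hat\jmath[\lambda]$ works — it lies in $N$ because $\hat\jmath\restriction\lambda=j\restriction\lambda\in M$ (by $M^\lambda\subseteq M$), it has order type $\lambda$ with $\lambda$ a cardinal of $N$, and $\hat\jmath[\lambda]\cap\lambda=\hat\jmath[\kappa]=\kappa$ has cardinality $\mu$ in $N$ since the $\col(\mu,\kappa)$-factor collapsed $\kappa$ onto $\mu$. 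Pulling this statement back through $\hat\jmath$ yields $(\mu^{++},\mu^+)\twoheadrightarrow(\mu^+,\mu)$ in $V[G*H]$.
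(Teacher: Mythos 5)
Your overall architecture --- Part (1) from the chain conditions, the decomposition $\bbL(\mu,\lambda)\cong\bbL(\mu,\kappa)\times\col(\mu,\kappa)\times\bbL(\kappa,\lambda)$ of Corollary~\ref{densemb} combined with term forcing to obtain a projection onto $\bbL(\mu,\kappa)*\dot\bbL(\kappa,\lambda)$ with quotient $\bbQ\times\col(\mu,\kappa)$, the master condition $\inf j[H]$, and duality for the ideal --- is the paper's. But there is a genuine gap in Part (3), and you have located the difficulty in the wrong place. Forcing over $V[G*H]$ with $\bbQ\times\col(\mu,\kappa)$ produces a generic $G'$ for $\bbL(\mu,\lambda)=j(\bbL(\mu,\kappa))$ only, i.e.\ for the \emph{first} step of $j(\bbP)$; it cannot produce your $\hat K$. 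Indeed, no projection from $(\bbL(\mu,\kappa)*\dot\bbL(\kappa,\lambda))*(\dot\bbQ\times\col(\mu,\kappa))\cong\bbL(\mu,\lambda)$ onto $j(\bbP)=\bbL(\mu,\lambda)*\dot\bbL(\lambda,j(\lambda))$ can exist: the former is $\lambda$-c.c.\ and so preserves $\lambda^+$, while the latter collapses $\lambda^+$ (as $j(\lambda)>\lambda^+$). Consequently the definition $I=\{A: 1\Vdash_{\bbQ\times\col(\mu,\kappa)}\kappa\notin\hat\jmath(A)\}$ is not yet meaningful, since $\hat\jmath$ only exists after a generic $H'$ for $\hat\jmath_1(\bbL(\kappa,\lambda))$ below the master condition $m$ has been produced. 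You correctly observe that forcing with that poset destroys the $\lambda$-c.c., but the remedy is not to isolate a $\lambda$-c.c.\ sub-quotient of it; the ideal quotient $\bbQ$ already lives entirely inside $\bbL(\kappa,\lambda)^V$ and has nothing to do with the second step of $j(\bbP)$.

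The missing idea is to replace $j$, for Part (3) only, by a derived almost-huge embedding $i:V\to N$ with $\mathrm{crit}(i)=\kappa$, $i(\kappa)=\lambda$, $N^{<\lambda}\subseteq N$, and $\lambda<i(\lambda)<\lambda^+$. Then in $V[G']$ one has $|i(\lambda)|=\lambda$, so $\bbL(\lambda,i(\lambda))^{N[G']}$ has only $\lambda$ many dense open sets lying in $N[G']$, and the ${<}\lambda$-closure of $N[G']$ in $V[G']$ together with the $\lambda$-directed closure of the poset lets one build a descending $\lambda$-sequence meeting all of them while passing below each partial master condition $q^*_\alpha=\inf i[H\rest\alpha]$. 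The resulting $H'$ lies in $V[G']$, so the lifted $i:V[G*H]\to N[G'*H']$ is definable in the $\bbQ\times\col(\mu,\kappa)$-extension and the ideal is well defined there. A second point you gloss over: to get $\p(\kappa)/I$ genuinely isomorphic to the completion of $\bbQ\times\col(\mu,\kappa)$, rather than merely completely embedded into it, one must recover the quotient generic from the induced ultrafilter $U$; the paper does this by factoring $i=k\circ i_U$ through $\mathrm{Ult}(V[G*H],U)$ and checking $\mathrm{crit}(k)>\lambda$, so that $i_U(G)=i(G)=G'$. Your Part (2) argument is fine as written, since there the second-step generic may legitimately be obtained by forcing further over $V[G']$.
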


\begin{proof}
    Since $\kappa$ and $\lambda$ are both Mahlo, $\bbL(\mu,\kappa)$ is $\kappa$-c.c., and $\bbL(\nu,\lambda)$ is $\lambda$-c.c.\ for $\nu=\mu,\kappa$.  This implies that $\kappa = \mu^+$ and $\lambda = \mu^{++}$ after forcing with $\bbL(\mu,\kappa) * \dot\bbL(\kappa,\lambda)$.
    
    By Lemma~\ref{term}, there is a dense embedding $\pi_T : \bbL(\kappa,\lambda) \to T(\bbL(\mu,\kappa),\dot\bbL(\kappa,\lambda))$, and by Proposition~\ref{laver}, $\mathrm{id} \times \pi_T : \bbL(\mu,\kappa) \times \bbL(\kappa,\lambda) \to \bbL(\mu,\kappa) * \dot\bbL(\kappa,\lambda)$ is a projection.
    Thus we have the following chain of dense embeddings (denoted by $\cong$) and projections (denoted by $\rightarrow$):
\begin{align*}
    \bbL(\mu,\lambda)   &\cong \bbL(\mu,\kappa) \times \col(\mu,\kappa) \times \bbL(\kappa,\lambda) &\text{(by Corollary~\ref{densemb})}\\     
                        &\cong \bbL(\mu,\kappa) \times \bbL(\kappa,\lambda) \times \col(\mu,\kappa) & \\
                        &\cong \bbL(\mu,\kappa) \times T(\bbL(\mu,\kappa),\dot\bbL(\kappa,\lambda)) \times \col(\mu,\kappa) & \text{(by Lemma~\ref{term})} \\
                        &\rightarrow \bbL(\mu,\kappa) * \dot\bbL(\kappa,\lambda)                        &\text{(by Proposition~\ref{laver})}
\end{align*}
    Let $\pi : \bbL(\mu,\lambda) \to \bbL(\mu,\kappa) * \dot\bbL(\kappa,\lambda)$ be the projection given by the composition of these maps, i.e.
    $$\pi : p \mapsto \la p \rest \kappa, p(\kappa), \pi_\kappa(p) \ra \mapsto \la p \rest \kappa,\pi_T\circ\pi_\kappa(p) \ra,$$
    where $\pi_\kappa : \bbL(\mu,\lambda) \to \bbL(\kappa,\lambda)$ is as in Corollary~\ref{densemb}.
    If $G * H \subseteq \bbL(\mu,\kappa)*\dot\bbL(\kappa,\lambda)$ is generic over $V$, then the quotient forcing $\pi^{-1}[G*H]$ is easily seen to be equivalent to $\bbQ \times \col(\mu,\kappa)$, where 
    $$\bbQ = \{ q \in \bbL(\kappa,\lambda)^V : \pi_T(q)^G \in H \}.$$
    $\bbQ$ is $\mu$-closed since $\bbL(\mu,\kappa)$ adds no ${<}\mu$-sequences, and if $\delta<\mu$ and $\la q_i : i < \delta \ra$ is a descending sequence in $\bbL(\kappa,\lambda)^V$ such that $\{\pi_T(q_i)^G : i <\delta \} \subseteq H$, then $\pi_T(\inf_i q_i)^G \in H$ as well.
    $\bbQ$ is $\kappa$-distributive since by Easton's Lemma, $\bbL(\kappa,\lambda)^V$ is $\kappa$-distributive in $V[G]$, and $\bbQ$ is equivalent to the quotient forcing of the projection $\mathrm{id} \times \pi_T : \bbL(\mu,\kappa) \times \bbL(\kappa,\lambda) \to \bbL(\mu,\kappa) * \dot\bbL(\kappa,\lambda)$.
    
    To show that $(\mu^{++},\mu^+) \twoheadrightarrow (\mu^+,\mu)$ holds in $V[G*H]$, we lift the embedding.
    Forcing with the quotient $\pi^{-1}[G*H]$, let $G' \subseteq \bbL(\mu,\lambda)$ be generic over $V$ such that $\pi(G')=G*H$.  
    Every $q \in H$ is a function defined on an Easton set $X_q \subseteq (\kappa,\lambda)$ and such that there exists $\gamma_q < \kappa$ with $\dom q(\delta) \subseteq \gamma_q$ for all $\delta \in X_q$.    
    Let $X = \bigcup \{ j(X_q) : q \in H \}$
    and let $q^* : X \to M[G']$ be defined by $q^*(\delta) = \bigcup \{ j(q)(\delta) : q \in H \}$, which is a function from $\kappa$ to $\delta$.  Thus $q^* \in \bbL(\lambda,j(\lambda))^{M[G']}$, and $q^* = \inf j[H]$.  If $H' \subseteq \bbL(\lambda,j(\lambda))^{M[G']}$ is generic over $V[G']$ with $q^* \in H'$, then $j$ can be lifted to $j : V[G * H] \to M[G' * H']$.
    
    Now let $\frak A \in V[G*H]$ be any structure on $\lambda$ in a countable language.  Then $j[\lambda] \prec j(\frak A)$, and in $M[G'*H']$, $|j[\lambda]| = \lambda = j(\kappa)$, and $| j[\lambda] \cap j(\kappa) | = |\kappa| = \mu$.  By elementarity, it holds in $V[G*H]$ that there is $\frak B \prec \frak A$ such that $|\frak B| = \kappa$ and $|\frak B \cap \kappa| = \mu$.

    To show the final claim, we lift a derived almost-huge embedding.  Using \cite[Theorem 24.11]{kanamori}, we can derive such an embedding $i : V \to N$ from $j$, with $\mathrm{crit}(i) = \kappa$, $i(\kappa) = \lambda$, $N^{<\lambda} \subseteq N$, and $\lambda < i(\lambda) < \lambda^+$.  Thus whenever $G' \subseteq \bbL(\mu,\lambda)$ is generic and $G*H = \pi(G')$, $i$ lifts to $i : V[G] \to N[G']$ with $i(G) = G'$, and $N[G']$ is ${<}\lambda$-closed in $V[G']$.  For $\alpha<\lambda$, let $q^*_\alpha = \inf j[H \rest \alpha]$.  Using $|i(\lambda)| = \lambda$ in $V[G']$, enumerate the dense open subsets of $\bbL(\lambda,i(\lambda))$ in $N[G']$ as $\la D_\alpha : \alpha<\lambda \ra$.  Then construct a descending sequence $\la r_\alpha : \alpha < \lambda \ra$ such that for all $\alpha<\lambda$, $r_{\alpha+1} \leq q^*_\alpha$ and $r_{\alpha+1} \in D_\alpha$.  Let $H'$ be the $M[G']$-generic filter generated by $\la r_\alpha : \alpha < \lambda \ra$, and note that $H' \in V[G']$.  Then $i$ lifts further to $i : V[G*H] \to N[G'*H']$, with $i(G*H) = G'*H'$.

    In $V[G*H]$, let $\bbB$ be the Boolean completion of $\bbQ \times \col(\mu,\kappa)$ and let $I = \{ A \subseteq \kappa : 1_\bbB \Vdash \kappa \notin i(A) \}$, which is a $\kappa$-complete normal ideal.  Define a map $e : \p(\kappa)/I \to \bbB$ by $e([A]_I) = || \kappa \in i(A)||$.  It is easy to check that $e$ preserves order and incompatibility, implying that  $\p(\kappa)/I$ is $\lambda$-c.c.  It then follows that $e$ is a complete embedding, since for any maximal antichain $\la [A_\alpha]_I : \alpha <\kappa \ra \subseteq \p(\kappa)/I$, the diagonal union $\nabla_\alpha A_\alpha$ is in the class $[\kappa]_I$, and so it is forced that $\kappa \in i(A_\alpha)$ for some $\alpha<\kappa$, implying that $\{ e([A_\alpha]_I) : \alpha<\kappa \}$ is maximal in $\bbB$.

    In order to show that $\p(\kappa)/I$ is equivalent to $\bbB$, it suffices to show that whenever $G' \subseteq \bbL(\mu,\kappa)$ is generic, $G*H = \pi(G')$, $K$ is the induced generic for $\bbB$, and $U = e^{-1}[K]$, then $K$ can be defined in $V[G][H][U]$.  Note that $U = \{ A \in \p(\kappa)^{V[G*H]} : \kappa \in i(A) \}$.  Let $N_U = \mathrm{Ult}(V[G*H],U)$ and let $i_U : V[G*H] \to N_U$ be the standard embedding.  If $k : N_U \to N[G'*H']$ is defined by $k([f]_U) = i(f)(\kappa)$, then $k$ is elementary and $i = k \circ i_U$.
    If $k = \mathrm{id}$, then $i_U(G) = i(G) = G'$.
    If $k \not= \mathrm{id}$, then $\mathrm{crit}(k)$ is a cardinal of $N_U$ above $\mu$, and since $(\mu^+)^{N_U}=i_U(\kappa) = i(\kappa) = \lambda$, $\mathrm{crit}(k) > \lambda$.  Thus also in this case, $i_U(G) = k(i_U(G)) = i(G) = G'$.  Therefore, in either case, $K$ is definable in $V[G][H][U]$ from the parameters $G,H,U,\pi,e$.
    \end{proof}

    \bibliographystyle{amsplain.bst}
    \bibliography{laver.bib}
\end{document}